\title[Dynamics of two-resonant]{Dynamics of two-resonant biholomorphisms}
\author{Jasmin Raissy, Liz Vivas}
\address{J. Raissy: Institut de Math\'ematiques de Toulouse, Universit\'e Paul Sabatier, 118 route de Narbonne, 31062 Toulouse, France.} \email{jraissy@math.univ-tlse.fr}
\address{L. Vivas: Institute Henri Poincar\'e, 11 rue Pierre et Marie Curie, 75231 Paris, France. \newline
Department of Mathematics, Purdue University, 150 N. University Street, West Lafayette, IN 47907-2067.} \email{lvivas@math.purdue.edu}
\date{November \the\day, \the\year}
\newtheorem{theorem}{Theorem}
\newtheorem{proposition}{Proposition}
\newtheorem{corollary}{Corollary}
\theoremstyle{definition}
\newtheorem{defin}{Definition}
\theoremstyle{remark}
\newtheorem{remark}{Remark}
\newcommand{\NN}{\mathbb{N}}
\newcommand{\CC}{\mathbb{C}}
\newcommand{\PP}{\mathbb{P}}
\def\a{{\alpha}}
\def\b{{\beta}}
\def\l{{\lambda}}
\def\1#1{\overline{#1}}
\def\2#1{\widetilde{#1}}
\def\3#1{\widehat{#1}}
\def\4#1{\mathbb{#1}}
\def\5#1{\frak{#1}}
\def\6#1{{\mathcal{#1}}}
\def\7#1{{\bf{#1}}}
\def\C{{\4C}}
\def\N{{\4N}}
\def\Q{{\4Q}}
\let\no=\noindent
\let\me=\medskip
\let\sm=\smallskip
\def\Dif{{\sf Diff}(\C^n,O)}
\def\Diff{{\sf Diff}}
\def\id{{\sf id}}
\def\Re{{\sf Re}\,}
\def\Arg{{\rm Arg}\,}
\def\res{{\rm Res}\,}
\def\phi{\varphi}
\begin{document}

\begin{abstract}
In this paper we study the existence of basins of attraction for germs of 2-resonant biholomorphisms of $\C^n$ fixing a point, that is germs such that the eigenvalues of the differential at the fixed point have a 2 dimensional family of resonances.
\end{abstract}

\maketitle

\section{introduction}

Given a germ $F$ of biholomorphism of $\C^n$ at a fixed point, that we may choose (without loss of generality) to be the origin $O$, and with diagonalizable differential at it, we are interested in the dynamics of $F$ near the origin, that is in the dynamical behavior of the sequence of the iterates of $F$ in a (sufficiently small) neighborhood of $O$. 

Since the local dynamics of a germ is invariant under conjugacy, one of the main tools to study it is the classification under holomorphic, topological or formal conjugacy of the germ. Moreover, it is natural to search for ``special representatives'' having an easier to understand dynamics. 

The most natural candidate is the linear germ, but, especially for $n\ge 2$, even the formal conjugacy to the linear term is not guaranteed due to the possible presence of {\it resonances} between the eigenvalues $\lambda_1,\dots,\lambda_n$, i.e., the relations $\l_h=\prod_{j=1}^n \l_j^{q_j}$ for $1\leq h\leq n$, with $q_1,\dots,q_n\in\N$, and $q_1+\cdots+q_n\ge 2$. However, for non-linearizable germs, another natural candidate (at least in the formal category) is given by {\it Poincar\'e-Dulac normal forms} (see \cite[Chapter 5]{Ar} and \cite{Ab2} for surveys). The study of the convergence of Poincar\'e-Dulac normal forms is made difficult, in most of the cases, by {\it small divisors problems} and by the not uniqueness of such normal forms. On the other hand, it is always possible to holomorphically conjugate to a germ in {\it approximated} Poincar\'e-Dulac normal form, that is in normal form up to a given order. Recently, these approximated normal forms have been successfully used to infer the dynamical behavior of a germ.

Furthermore, whereas almost everything is known for the local holomorphic dynamics in the one-dimensio\-nal case (see for example the surveys \cite{Ab2} and \cite {Br}), the study of local dynamics of non holomorphically linearizable germs is still far from being complete, and only some cases have been studied in detail. The simplest case is the {\it attracting} (resp. {\it repelling}) case, i.e., when all the eigenvalues of the linear term have modulus strictly less than 1 (resp. strictly larger than 1), where the dynamics of the germ is always depicted by its linear term at the origin. 

Another interesting case is the {\it tangent to the identity} one, that is germs whose linear part is the identity. Results on the existence of basins of attraction were proved by \'Ecalle \cite{Ec}, Hakim \cite{Ha, Ha1} (see also \cite{Ari}), and, more recently by the second author \cite{Vi}. In the tangent to the identity case, every multi-index is indeed resonant, so Poincar\'e-Dulac normal forms are not of much help. 

Recently, holomorphic Poincar\'e-Dulac normal forms have been linked to the existence of invariant holomorphic foliations with some additional properties, \cite{Ra}. Furthermore, Bracci and Zaitsev \cite{BZ} and Bracci, Zaitsev and the first author \cite{BRZ}, studied the dynamics of {\it multi-resonant} germs, i.e., germs whose resonances are generated over $\N$ by a finite number of $\Q$-linearly independent multi-indices, through the study of the dynamics induced by their approximated Poincar\'e-Dulac normal forms on the almost invariant holomorphic foliations (see \cite{BZ} and \cite{BRZ} for details, and the next section for a brief summary of the main results and definitions).

\sm In this paper, we study local dynamics of $2$-resonant, non-linearizable germs with diagonalizable linear term. In the rest of the paper, and without mentioning it explicitly, we shall consider only germs of biholomorphims whose differential at $O$ is diagonalizable. 

Following \cite{BRZ}, we say that a germ is {\it $2$-resonant with respect to the first $r$ eigenvalues $\{\l_1,\ldots, \l_r\}$} if there exist two linearly independent multi-indices $P,Q\in \N^r\times\{0\}^{n-r}$, such that all resonances $\l_s=\prod_{j=1}^n \l_j^{\beta_j}$ for $1\leq s\leq r$ are precisely of the form $(\beta_1,\ldots, \beta_n)=k P+ h Q+ e_s$ with $k,h\in \N$, $e_s=(0,\ldots, 0,1,0\ldots, 0)$ with $1$ in the $s$-th coordinate.

Given a germ as above, we can consider the map $\pi\colon\C^n\to\C^2$, $\pi(z)=(z^P, z^Q)$, and the {\it parabolic shadow} $f$ of $F$ as it is defined in \cite{BRZ} (see also section 2), which is a tangent to the identity germ of $(\C^2, O)$ describing roughly the dynamics induced by approximated Poincar\'e-Dulac normal forms of $F$ on the almost-invariant foliation $\{z\in\C^n : \pi(z)=\hbox{const}\}$. 

The results in \cite{BRZ} were obtained using the dynamical properties of the parabolic shadow $f=\id + H_{k_0 +1}$ (namely the existence of basins of attraction centered along a {\it fully-attracting} {\it non-degenerate} characteristic direction ensured by the works of Hakim \cite{Ha, Ha1}) together with an attracting condition on the germ $F$ with respect to the projection, called {\it parabolically-attracting} introduced in \cite{BRZ}. 

In \cite{Vi} the second author generalized Hakim's results for maps tangent to the identity in $\CC^2$. She proved the existence of basins of attraction centered along {\it irregular} or {\it Fuchsian-attracting} {\it degenerate} characteristic directions, as well as centered along {\it irregular} non-degenerate characteristic directions, which are in particular not fully-attracting (see also \cite{La}).

It could then seem possible to use all the results of \cite{Vi}, together with the parabolically-attracting condition of \cite{BRZ}, to 
obtain basins of attraction for $2$-resonant germs whose parabolic shadow has a degenerate or an irregular non-degenerate characteristic direction. 

Our first result is that this is not possible for the degenerate characteristic directions. More precisely, we prove that for a map whose parabolic shadow has a degenerate characteristic direction, then it can not be also parabolically-attracting. We provide examples of germs with parabolic shadows having a basin along a degenerate characteristic direction but with no basins of attraction (see section 4). On the other hand, in our second result, we prove the existence of basins when the parabolic shadow has an irregular non-degenerate characteristic direction and the map $F$ is parabolically-attracting.

Before stating our main results we need to recall a couple of definitions. 

Let $v\in\C^2\setminus\{0\}$ be a non-degenerate characteristic direction for $f$ in the sense of \'Ecalle \cite{Ec} and Hakim \cite{Ha}, i.e., $H_{k_0+1}(v) = c v$ for some $c\in\C\setminus\{0\}$. It is clear that if $v$ is a characteristic direction, then any scalar multiple of $v$ will also be a characteristic direction, hence the direction $[v]$ in $\PP^1$ is well determined. In the rest of the paper we will refer by $v$ or $[v]$ as the characteristic direction. By multiplying by an appropriate constant, we can normalize the direction so that $H_{k_0+1}(v) = -(1/k_0) v$ (we say $v$ is {\it normalized} as in \cite{BRZ}). Following the definitions introduced by Abate and Tovena \cite{AT2} (see also section 2), we say that $F$ is {\it $(f,v)$-irregular-nondegenerate}, if $v$ is an irregular non-degenerate characteristic direction for $f$. We say that $F$ is {\it $(f,v)$-parabolically-attracting} with respect to $\{\l_1,\ldots, \l_r\}$ if $v$ is a normalized characteristic direction of $f$, and
\begin{equation}
\Re\left(\sum_{k_1+k_2=k_0\atop (k_1,k_2)\in\N^2} {\frac{a_{(k_1,k_2), j}}{\l_j}} v_1^{k_1}v_2^{k_2}\right) <0 \quad j=1,\ldots, r.
\end{equation}
Such conditions are invariant in the sense that if they hold for a parabolic shadow, then they hold for any other parabolic shadow of $F$.
We say that $F$ is {\it irregular-nondegenerate} if it is $(f,v)$-irregular-nondegenerate with respect to some parabolic shadow $f$ and some normalized characteristic direction $v\in \C^2$. We also say that $F$ is {\it parabolically-attracting} if it is $(f,v)$-parabolically-attracting with respect to some parabolic shadow $f$ and some normalized non-degenerate characteristic direction $v\in \C^2$.

Our main results are then the following:

\begin{proposition}\label{mainprop}
Let $F\in\Dif$ be $m$-resonant with respect to the first $r\le n$ eigenvalues, and let $f$ be a parabolic shadow of $F$. If $v\in\C^m$ is any representative of a degenerate characteristic direction for $f$, then $F$ can not be $(f,v)$-parabolically-attracting.
\end{proposition}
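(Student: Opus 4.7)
The strategy is to translate both the degeneracy of $v$ and the parabolically-attracting condition~(1) into statements about the top-order Poincar\'e--Dulac coefficients of $F$, and then extract a contradiction by a short positivity argument. The crucial observation is that~(1) forces the ``characteristic eigenvalue'' $c$ defined by $H_{k_0+1}(v) = c\,v$ to have strictly negative real part --- in particular $c \neq 0$, so $v$ cannot be degenerate.

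\textbf{Step 1 (explicit form of $H_{k_0+1}$).} After conjugating $F$ to an approximated resonant normal form we may write, for $j = 1,\ldots, r$,
$$F_j(z) = \l_j z_j + \sum_{K\in\N^m} a_{K,j}\, z^{K_1 P_1 + \cdots + K_m P_m + e_j},$$
where the $a_{K,j}$ are the same coefficients that appear in~(1). The $m$-resonance hypothesis forces $\l^{P_i} = 1$ for every $i$, so setting $w_i := z^{P_i}$ gives
$$F^{P_i}(z) = z^{P_i} \prod_{j=1}^{r}\Bigl(1 + \sum_{K}\frac{a_{K,j}}{\l_j}\, w^{K}\Bigr)^{(P_i)_j}.$$
Expanding and extracting the homogeneous piece of degree $k_0 + 1$ --- the computation underlying the definition of the parabolic shadow in~\cite{BRZ} --- one identifies the $i$-th component of $H_{k_0+1}$ as
$$h_i(w) = w_i \sum_{|K|=k_0}\Bigl(\sum_{j=1}^{r}(P_i)_j\,\frac{a_{K,j}}{\l_j}\Bigr)\, w^K.$$

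\textbf{Step 2 (reading off $c$ at $v$).} Let $v \in \C^m \setminus \{0\}$ satisfy $H_{k_0+1}(v) = c\,v$ for some $c \in \C$. Since $v \neq 0$, pick any index $i_0$ with $v_{i_0} \neq 0$. Dividing the identity $h_{i_0}(v) = c\, v_{i_0}$ by $v_{i_0}$ yields
$$c = \sum_{j=1}^{r}(P_{i_0})_j\, A_j, \qquad A_j := \sum_{|K|=k_0}\frac{a_{K,j}}{\l_j}\, v^K.$$

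\textbf{Step 3 (positivity contradiction).} Suppose toward a contradiction that $F$ is $(f,v)$-parabolically-attracting. Condition~(1) then reads $\Re A_j < 0$ for every $j = 1,\ldots,r$. Since $P_{i_0} \in \N^{r} \setminus \{0\}$ has non-negative integer entries with at least one strictly positive,
$$\Re c = \sum_{j=1}^{r}(P_{i_0})_j\, \Re A_j < 0.$$
In particular $c \neq 0$, so $v$ is a \emph{non-degenerate} characteristic direction of $f$, contrary to the hypothesis on $v$.

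\textbf{Where the difficulty sits.} The only non-trivial ingredient is the explicit expression for $h_i$ in Step~1; this is exactly the computation that defines the parabolic shadow in~\cite{BRZ}, so it can be invoked rather than re-derived. Once the formula is in place, the rest of the argument is a one-line convex-combination observation exploiting that the multi-indices $P_i$ lie in $\N^{r}$, with the only mild care being the choice of an index $i_0$ with $v_{i_0} \neq 0$ (automatically available since $v \neq 0$).
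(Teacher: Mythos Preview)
Your proof is correct and follows essentially the same route as the paper's: write out $H_{k_0+1}$ explicitly in terms of the Poincar\'e--Dulac coefficients, evaluate at $v$ along a coordinate $i_0$ with $v_{i_0}\ne 0$, take real parts, and use that $P^{i_0}\in\N^r$ is non-zero to force a sign contradiction. The only cosmetic difference is that you phrase the conclusion as ``$\Re c<0$, hence $v$ is non-degenerate'' whereas the paper starts from $c=0$ and derives $0=\sum_j (P^{i_0})_j\,\Re A_j<0$; these are the same argument read in opposite directions.
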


\begin{theorem}\label{mainthmintro}
Let $F\in\Dif$ be $2$-resonant with respect to the eigenvalues
$\{\l_1,\ldots\l_r\}$. Assume that
$|\l_j|=1$ for $j=1, \dots, r$ and $|\l_j|<1$ for
$j=r+1,\ldots, n$. If $F$ is irregular-nondegenerate and parabolically-attracting, then
there exists a basin of attraction
having $O$ at the boundary.
\end{theorem}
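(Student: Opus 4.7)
The plan is to adapt the scheme from \cite{BRZ}, replacing the appeal to Hakim's basin theorem with the second author's basin theorem from \cite{Vi} for irregular non-degenerate characteristic directions. First, I would conjugate $F$ to an approximated Poincar\'e-Dulac normal form up to a sufficiently high order $N$. In these coordinates every monomial of degree at most $N$ appearing in $F$ is resonant, hence of the form $z^{kP+hQ+e_s}$, so the functions $u_1 = z^P$ and $u_2 = z^Q$ satisfy
$$
\pi(F(z)) = f(\pi(z)) + O(\|z\|^{N+1}),
$$
where $f = \id + H_{k_0+1} + \cdots$ is a parabolic shadow of $F$. The integer $N$ will be chosen large enough that this error term is negligible with respect to the attraction rates coming from $f$.

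Since $v\in\CC^2$ is by hypothesis an irregular non-degenerate characteristic direction for $f$, the main theorem of \cite{Vi} furnishes an open basin $\Omega\subset\CC^2$ which is attracted to $O$ by the iterates of $f$ along $[v]$, together with quantitative asymptotics for the coordinates of $f^n(u)$ on $\Omega$. I would then take as candidate basin $U := \pi^{-1}(\Omega)\cap \Delta$, where $\Delta$ is a small polydisc in the last $n-r$ coordinates. The task is to verify that $F(U)\subset U$ and that every orbit in $U$ converges to $O$. The inclusion $\pi(F(z))\in\Omega$ for $z\in U$ follows from the lifting identity above and the $f$-invariance of $\Omega$, provided $N$ is large. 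The coordinates $z_{r+1},\ldots,z_n$ decay exponentially because $|\l_j|<1$ for $j>r$. The core issue is the contraction of the resonant fiber coordinates $z_1,\ldots,z_r$: writing $F_j(z)=\l_j z_j(1+g_j(z))$, expanding $g_j$ in the resonant monomials, and combining the parabolically-attracting inequality (1) with the asymptotic behavior of $\pi(F^n(z))$ given by \cite{Vi}, one telescopes $\log|z_j\circ F^n|$ to show that $|z_j\circ F^n|\to 0$ for $j=1,\ldots,r$. This mirrors the strategy in \cite{BRZ}, but now with the decay dictated by the irregular expansion.

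The hardest step will be precisely this last telescoping estimate in the irregular regime. In \cite{BRZ} the estimate was driven by the sharp rate $|u_i\circ f^n|\sim c_i n^{-1/k_0}$ on Hakim's fully-attracting basin, which is exactly what converts the strict negativity in (1) into a summable dominant term controlling the iteration of $z_j$. In the irregular non-degenerate case the corresponding asymptotic is genuinely weaker and typically carries logarithmic or fractional corrections governed by the irregularity exponent. I would address this either by shrinking $\Omega$ to a sub-basin on which a cleaner rate holds, or by running the telescoping argument of \cite{BRZ} directly against the asymptotic expansions produced in \cite{Vi}; in both approaches the strictness of inequality (1) provides the room needed to absorb the corrections and still close the estimate, yielding $z_j\circ F^n\to 0$ and hence an honest basin with $O$ on its boundary.
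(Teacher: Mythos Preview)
Your overall strategy---reduce to the parabolic shadow, invoke the basin from \cite{Vi} for the irregular non-degenerate direction, and then control the fibre coordinates via the parabolically-attracting hypothesis exactly as in \cite{BRZ}---is the paper's strategy. Two points, however, need correction.

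\textbf{The candidate basin is too loose.} Taking $U=\pi^{-1}(\Omega)\cap\Delta$ with $\Delta$ a fixed polydisc only in the last $n-r$ coordinates does not give an $F$-invariant set, and your claim ``$\pi(F(z))\in\Omega$ follows from the lifting identity and the $f$-invariance of $\Omega$, provided $N$ is large'' is where this breaks. The lifting identity reads $\pi(F(z))=f(\pi(z))+g(z)$ with $g(z)=O(\|z\|^{N+1})$; on your set, $\|z\|$ is bounded only by a fixed constant, so $g(z)$ is a \emph{fixed}-size perturbation. After the blow-up along $[v]$ the second component of this error is divided by $s$, and in the Fatou-type coordinate $x=s^{-k_0}$ it becomes a perturbation of order $|x|^{(k_0+1)/k_0}$, which overwhelms the sector condition $|\Arg x|<k_0\theta$ as $|x|\to\infty$. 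This is why both \cite{BRZ} and the paper use the more restrictive set
\[
\widetilde V=\bigl\{z\in\C^n:\ |z_j|<|s|^{k_0\beta}\ \text{for all }j=1,\dots,n,\ \pi(z)\in B\bigr\},
\]
coupling \emph{every} coordinate $z_j$ to $|s|$. With this coupling and $\beta(l+1)\ge 4$ one gets $\|z\|^{l+1}\le |s|^{4k_0}$, hence $|s^{-1}g(z)|=O(|s|^{k_0+2})$, and both the blow-up dynamics and the Fatou-coordinate picture remain under control. Invariance is then proved for $\widetilde V$ directly, and convergence is immediate from $|F_j^{\circ\ell}(z)|<|s^{(\ell)}|^{k_0\beta}\to 0$; no separate telescoping of $\log|z_j\circ F^\ell|$ is needed.

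\textbf{The ``hardest step'' is not where you expect.} You anticipate that the irregular asymptotics will weaken the rate feeding the parabolically-attracting estimate. It does not. After blow-up, $s$ is the coordinate along the \emph{non-degenerate} direction $v$, and its dynamics are $s_1=s-\tfrac{1}{k_0}s^{k_0+1}+\cdots$, i.e.\ $x_1=x+1+\cdots$, exactly as in Hakim; in particular $|s^{(\ell)}|^{k_0}\sim 1/\ell$. The irregularity lives entirely in the transverse blow-up coordinate $t$, which stays bounded on $B$. Since for $u\in B$ one has $u=(s,st)$ with $t$ small, the parabolically-attracting inequality at $v=(1,0)$ gives, uniformly on $B$,
\[
\left|1+\sum_{k_1+k_2=k_0}\frac{a_{(k_1,k_2),j}}{\l_j}\,u_1^{k_1}u_2^{k_2}\right|\le 1-2c\,|s|^{k_0},
\]
with no logarithmic or fractional corrections. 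The contraction estimate for $|F_j(z)|$ is therefore identical in form to the one in \cite{BRZ}; the only place the irregular analysis from \cite{Vi} enters is in checking that the $(s,t)$ (equivalently $(x,y)$) dynamics preserves $B$.
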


\sm
The strategy of the proof of Theorem \ref{mainthmintro} uses the same ideas as in \cite{BRZ}. Indeed, we use the parabolic shadow $f$ to obtain a basin $B$ in $\CC^2$. Then we define $\tilde{B} \subset \pi^{-1}(B)$ in $\CC^n$ so that $F(\tilde{B}) \subset \tilde{B}$. In order to make sure that $\tilde{B}$ is $F$-invariant, we choose the first $r$ coordinates so that their projection by $\pi$ is in $B$, and we choose the other coordinates in such a way that their modulus is less than a power of the projection coordinates. Using the hypothesis of parabolically-attracting we are able to see that the modulus of the coordinates will go exponentially fast to $0$.  

This result complements the corresponding result in \cite{BRZ} for the case $m=2$, and moreover, from it we can also deduce the non-necessity of the hypothesis of {\it fully-attracting} (see section 2) for the characteristic direction of the parabolic shadow.

\me

The plan of the paper is the following. In section 2 we briefly recall the basic definitions, the result that we need from the dynamics of tangent to the identity germs, and we prove Proposition \ref{mainprop}. In section 3 we prove Theorem \ref{mainthmintro}. In section 4 we apply Theorem \ref{mainthmintro} to a few examplee, we provide an example of germ with no basins of attraction, and we make some observations on partial cases for holomorphically normalizable germs.

\section{preliminaries}

As stated in the introduction, we shall follow the strategy of the recent papers \cite{BZ} and \cite{BRZ}, where results about the dynamics of germs of biholomorphisms tangent to the identity in $\CC$, and in $\CC^m$, were used to prove the existence of attracting basins for one-resonant, respectively multi-resonant, germs satisfying additional conditions. We shall use the same approach together with a recent result on the dynamics of germs tangent to the identity in $\CC^2$ to prove results on the dynamics of 2-resonant germs satisfying additional conditions.

We start by briefly recalling the definition of multi-resonant germ, and we refer the reader to the very thorough study in \cite{BRZ}. 

Given $\{\lambda_1,\ldots,
\lambda_n\}$ a set of complex non-zero numbers, a {\em
resonance} is a pair $(j,L)$, where $j\in\{1,\ldots, n\}$ and
$L=(l_1,\ldots, l_n)\in\N^n$ is a multi-index with
$|L|:=\sum_{h=1}^n l_h \ge2$ such that $\lambda_j=\lambda^L$
(where $\lambda^L:=\lambda_1^{l_1}\cdots \lambda_n^{l_n}$). We
shall use the notation
$$
\res_j(\lambda) := \{Q\in\N^n : |Q|\ge 2, \lambda^Q = \lambda_j\}.
$$
With a slight abuse of notation, we denote by $e_j=(0,\ldots, 0,
1, 0,\ldots, 0)$ both the multi-index with $1$ at the $j$-th
position and $0$ elsewhere, and the vector with the same entries
in $\C^n$. 

Let $F$ be in $\Dif$, and let $\lambda_1,\ldots, \lambda_n$ be
the eigenvalues of the differential $dF_O$. We say that $F$ is
{\it $m$-resonant with respect to the first $r$ eigenvalues}
$\l_{1},\ldots,\l_{r}$ $(1\le r\le n)$ if there exist $m$
multi-indices $P^1,\dots,P^m\in \N^{r}\times\{O\}^{n-r}$ linearly independent
over $\Q$, so that the  resonances $(j,L)$ with $1\le j\le r$
are precisely of the form
\begin{equation}
L = e_j + k_1 P^1 + \cdots + k_m P^m
\end{equation}
with $k_1,\dots, k_m\in\N$ and $k_{1}  + \cdots + k_m\ge 1$. The vectors $P^1,\dots, P^m$ are called {\it generators over $\N$} of the resonances of $F$.
We call $F$ {\it multi-resonant with respect to the first $r$ eigenvalues}  if it is $m$-resonant
with respect to these eigenvalues for some $m\ge1$.

We focus our attention to the case $m=2$. 

Given $F$ a 2-resonant germ with respect to the first $r$ eigenvalues $\l_1,\ldots,\l_r$, and any given $l$ large, by the theorem of Poincar\'e-Dulac (see \cite{Ar}) we can holomorphically conjugate $F$ to a map of the form
\begin{equation}
\widetilde F(z) = Dz + \sum_{s=1}^r\sum_{|k_1 P+ k_2 Q|\ge 2 \atop (k_1, k_2)\in\N^2} a_{(k_1, k_2),s} z^{ k_1 P+ k_2 Q} z_s e_s +  \sum_{s=r+1}^n R_s(z)e_s + O(\|z\|^l),
\end{equation}
where $D={\rm Diag}(\lambda_1,\dots, \lambda_n)$, $R_s(z)=O(\|z\|^2)$ for $s=r+1,\ldots,n$, and $P,Q$ are the ordered generators over $\N$ of the resonances.
The {\it weighted order} of $F$ is the minimal $k_0=k_1+k_2\in\N\setminus\{O\}$ such that the coefficient $a_{(k_1,k_2),s}$ of $\2{F}$ is non-zero for some $1\le s\le r$. It was proved in \cite{BRZ} that such a definition is well-posed.
We recall that the weighted order of $F$ is $+\infty$ if and only if $F$ is formally linearizable in the first $r$ coordinates.

\no Define $$G(z) = Dz + \sum_{s=1}^r\sum_{|k_1 P+ k_2 Q|\ge 2 \atop (k_1, k_2)\in\N^2} a_{(k_1, k_2),s} z^{ k_1 P+ k_2 Q} z_s e_s. $$ 
Using the projection map $\pi(z) = (z^P,z^Q)$, we obtain the following commuting diagram:
\[ 
\begin{CD} 
\CC^n @>G>> \CC^n\\ 
@V\pi VV  @V\pi VV\\
\CC^2 @>h>> \CC^2 
\end{CD} \] 
where $h$ is a tangent to the identity germ in $\CC^2$ of the form $h(u) = u + H_{k_0 + 1}(u)+O(\|u\|^{k_0+2})$, where
\begin{equation}\label{matricehakim}
H_{k_0 + 1}(u) =\left(\begin{array}{c}
				\displaystyle u_1\sum_{k_1+k_2 = k_0} \left({p_1}\frac{a_{(k_1,k_2), 1}}{\l_1} + \cdots + {p_r}\frac{a_{(k_1,k_2), r}}{\l_r}\right)u_1^{k_1}u_2^{k_2}\\
				\displaystyle u_2\sum_{k_1+k_2 = k_0} \left({q_1}\frac{a_{(k_1,k_2), 1}}{\l_1} + \cdots + {q_r}\frac{a_{(k_1,k_2), r}}{\l_r}\right)u_1^{k_1}u_2^{k_2}
				\end{array}\right),
\end{equation}
and, by construction, it approximates up to a given finite order the action of $F$ on the almost invariant foliation $\{z\in\C^n : \pi(z)=\hbox{const}\}$. We shall call $f(u)=u+H_{k_0+1}(u)$ a {\it parabolic shadow} of $F$.
\sm

We shall also need a few definitions for tangent to the identity germs, that we recall here.
Given
\begin{equation}\label{h_0}
h(u) := u + H_{k_0+1}(u) + O(\|u\|^{k_0+2}),
\end{equation}
a germ at $O$ of biholomorphism of $(\C^m, O)$, $m\ge2$, tangent to the identity, where  $H_{k_0+1}$ is the first non-zero term in the homogeneous expansion of $h$, and $k_0\ge 1$, we call the number $k_0+1\ge 2$ the {\it order} of $h$.

Several important results about basins for tangent to the identity germs have been obtained in the last decade. We refer the reader to \cite{BRZ} where several results have been explained and applied to the existence of parabolic basins for resonant germs, and we recall here only the result that will be relevant to prove our main theorem.

\begin{defin}
Let $h\in\Diff(\C^m, O)$ be of the form \eqref{h_0}.  A {\it
characteristic direction} for $h$ is a non-zero vector
$v\in\C^m\setminus\{O\}$ such that $H_{k_0+1}(v)=\lambda v$ for
some $\lambda\in\C$. If $\lambda=O$, $v$ is a {\it
degenerate} characteristic direction; otherwise, (that is, if
$\lambda\ne 0$) $v$ is {\it non-degenerate}.
\end{defin}

In the case $m=2$, examples of basins along non-degenerate not fully-attracting and degenerate characteristic directions have been shown recently (see \cite{Ab2}, \cite{V}), inspired also on the classification introduced in the recent paper \cite{AT2}, where Abate and Tovena studied the dynamics of the time 1-map of homogeneous holomorphic vector fields $\C^2$. 
We shall use the result obtained in \cite{Vi} for tangent to the identity germs in $\C^2$ with non-degenerate characteristic direction, but before stating it we need to explain the classification given by Abate and Tovena \cite{AT2}.

Writing $u=(x,y)\in\C^2$, our map has the form:
\begin{equation}
h(u) := u + H_{k_0+1}(u) + O(\|u\|^{k_0+2}),~~\hbox{where}~~H_{k_0+1}(x,y) =  (Q_1(x,y),Q_2(x,y))
\end{equation}
where both $Q_1$ and $Q_2$ have homogeneous degree $k_0+1$. Using the above definitions, we clearly have that $(1,u_0)$ is a characteristic direction when $Q_2(1,u_0) = u_0Q_1(1,u_0)$. Equivalently if we define $g_2(u) = Q_2(1,u) - uQ_1(1,u)$, then $(1,u_0)$ is a characteristic direction if $g_2(u_0) = 0$. Define $g_1(u) = Q_1(1,u)$. 

Let $\mu_j(u_0)\in \NN$ be the order of vanishing of $g_j$ at $u_0$. Then we shall say that the direction given by $(1,u_0)$ is:

\begin{itemize}
\item an {\it apparent} characteristic direction if $\mu_2(u_0) < \mu_1(u_0) + 1$;
\item a {\it Fuchsian} characteristic direction if $\mu_2(u_0) = \mu_1(u_0) + 1$; and
\item an {\it irregular} characteristic direction if $\mu_2(u_0) > \mu_1(u_0) + 1$.
\end{itemize}
The index of a direction $i_{(1,u_0)}$ is defined as the residue $\textrm{Res}_{u=u_0}\frac{g_1(u)}{g_2(u)}$. 

\sm Using such a classification, in \cite{Vi} the second author proved that if a tangent to the identity germ in $\Diff(\C^2,O)$ has an irregular characteristic direction, or a Fuchsian characteristic direction such that $\Re i_{(1,u_0)} > 1/\mu_1(u_0)$, then there exists a parabolic basin of attraction centered in the direction. Note that Fuchsian non-degenerate characteristic directions such that $\Re i_{(1,u_0)} > 1/\mu_1(u_0)$ are exactly {\it fully-attracting} non-degenerate characteristic directions as defined in \cite{BRZ}, i.e., the real parts of the directors (see \cite{Ha} or \cite{Ari} for the precise definition) are strictly positive, and hence those characteristic directions satisfy the hypotheses of Hakim's result \cite{Ha}. In particular, for non-degenerate characteristic directions, we shall use the following result that complements the result of Hakim in dimension $2$.

\begin{theorem}[\cite{Vi}]\label{irregular}
Let $h\in\Diff(\C^2, O)$ be a tangent to the identity germ of biholomorphism. If $[v]=[1:u_0]$ is an irregular non-degenerate characteristic direction for $h$, then there exists a parabolic domain $D$ for $h$ centered in the direction $[v]$. Moreover, there exists a Fatou coordinate $\psi: D \to \CC$ such that $\psi(h(z)) = \psi(z) + 1$.
\end{theorem}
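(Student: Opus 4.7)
The plan is to adapt Hakim's blow-up construction to the irregular setting. After a linear change placing the characteristic direction at $[1:0]$ and a rescaling making $g_1(0)=-1/k_0$ (possible since $[v]$ is non-degenerate), substitute $y=xw$. The map $h$ becomes a local biholomorphism of $(x,w)$ near $(0,0)$ with
\begin{align*}
x' &= x - \tfrac{1}{k_0}\,x^{k_0+1} + O(x^{k_0+1}w) + O(x^{k_0+2}),\\
w' &= w + g_2(w)\,x^{k_0} + O(x^{k_0+1}),
\end{align*}
and $g_2(w)=O(w^{\mu_2})$ with $\mu_2\ge 2$, by the irregular hypothesis.

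The next step is to construct an invariant petal
\[
\Omega := \{(x,w) : 0<|x|<\eps,\ |\arg x^{k_0}-\pi|<\tfrac{\pi}{2}-\delta,\ |w|<\eta\}
\]
for suitable small $\eps,\eta>0$ and $0<\delta<\pi/2$, and to prove $h(\Omega)\subset\Omega$. For orbits $(x_n,w_n)=h^n(x,w)$ in $\Omega$, one has the classical parabolic asymptotic $|x_n|^{k_0}\sim 1/(k_0n)$, and the estimate $|w_{n+1}-w_n|\le C|w_n|^{\mu_2}|x_n|^{k_0}$ yields, through a discrete Gronwall comparison with the ODE $\dot w = c\,w^{\mu_2}/t$, the decay $|w_n|=O((\log n)^{-1/(\mu_2-1)})$; in particular $w_n\to 0$, so the petal is genuinely invariant and iterates converge to the fixed point.

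For the Fatou coordinate I would set
\[
\psi_n(x,w) := -\tfrac{1}{k_0\,x_n^{k_0}} - n - c\log n,
\]
with $c$ the logarithmic correction arising from the second-order term of the normalized $x$-equation. By construction $\psi_{n+1}\circ h - \psi_n$ is a small error term, and I would prove that $\psi_n$ converges locally uniformly on $\Omega$ to a holomorphic $\psi$ with $\psi\circ h=\psi+1$. Transporting $\Omega$ back via $y=xw$ then produces the parabolic domain $D$ and the Fatou coordinate announced in the theorem.

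The main obstacle is the simultaneous control of $x_n$ and $w_n$ that makes the limit defining $\psi$ exist. The decay rate $(\log n)^{-1/(\mu_2-1)}$ is only barely enough: for $\mu_2=2$ the $w$-dependent perturbation contributes a Fatou-coordinate error of size $1/(n\log n)$, which is just non-summable. The remedy is to perform, before taking the limit, a finite sequence of formal tangential changes of coordinates that push the $w$-dependence of the $x$-equation to arbitrarily high order in $w$; after $k$ such iterations the error becomes $|w_n|^k|x_n|^{k_0}$, which is summable once $k$ exceeds $\mu_2-1$. The strict inequality $\mu_2\ge 2$ of the irregular hypothesis is exactly what allows this iterative normalization to close the bootstrap, distinguishing the irregular case from the more delicate Fuchsian one treated by Hakim's original method.
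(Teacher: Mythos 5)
This theorem is quoted from \cite{Vi}; the paper contains no proof of it, so I can only assess your argument on its own terms. Your overall architecture (blow-up, Leau--Fatou petal in $x$, logarithmic drift of $w$, Fatou coordinate with logarithmic correction, finite-order normalization to make the Abel-equation errors summable) is in the right spirit, but there is a genuine gap at the first dynamical step: the region $\Omega=\{0<|x|<\eps,\ |\arg x^{k_0}-\pi|<\pi/2-\delta,\ |w|<\eta\}$ is \emph{not} $h$-invariant, and the Gronwall step you use to deduce $|w_n|=O((\log n)^{-1/(\mu_2-1)})$ is not valid. A one-sided bound $|w_{n+1}-w_n|\le C|w_n|^{\mu_2}|x_n|^{k_0}$ with $|x_n|^{k_0}\sim 1/n$ only gives $|w_n|\le |w_0|\,n^{C\eta^{\mu_2-1}}$, i.e.\ possible growth; to get decay you need the \emph{direction} of the increment. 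Writing $g_2(w)=aw^{\mu_2}+O(w^{\mu_2+1})$ with $a\ne0$ and setting $v=w^{-(\mu_2-1)}$, one finds $v_{n+1}=v_n-(\mu_2-1)\,a\,x_n^{k_0}+(\hbox{summable})$, hence $v_n=v_0-(\mu_2-1)a\log n+O(1)$. Thus $w_n\to0$ only when the ray $t\mapsto v_0-(\mu_2-1)at$ stays away from the origin; if $w_0^{-(\mu_2-1)}$ lies near the ray $\RR_{\ge0}\,a$, then $|v_n|$ first decreases, $|w_n|$ grows past $\eta$, and the orbit leaves the chart. Concretely, for $h(x,y)=(x-x^2,\,y-xy+y^2)$ the direction $[1:0]$ is irregular non-degenerate with $g_2(w)=w^2$, and every orbit with $w_0$ real positive has $1/w_n\approx 1/w_0-\log n$, so $w_n$ increases monotonically and exits any disk $|w|<\eta$.

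The fix is to restrict $w$ by a condition of the form $\Re\bigl(c\,w^{-(\mu_2-1)}\bigr)>R$ (after normalizing the leading coefficient of $g_2$), together with a polynomial coupling between the rates of escape of $x^{-k_0}$ and $w^{-(\mu_2-1)}$ needed to control the cross terms; this is exactly the shape of the set $V_{R,N,\theta}=\{\Re(x)>R,\ |\Arg(x)|<k_0\theta,\ \Re(y)>R,\ |y|^N<|x|\}$ that the present paper imports from \cite{Vi} in the proof of Theorem \ref{mainthm1}. Once the domain is corrected, your device of finitely many coordinate changes pushing the $w$-dependence of the $x$-equation to high order in $w$ is a reasonable way to make the Fatou-coordinate errors summable (and the recursion for the coefficients does close, since the linearized operator at each order is invertible). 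You should, however, also address the $w$-independent tail: for $k_0\ge2$ the $O(x^{k_0+2})$ term of the $x$-equation contributes errors of size $n^{-1/k_0}$ to $x_{n+1}^{-k_0}-x_n^{-k_0}-1$, which are likewise non-summable and must be removed by further normalization or absorbed into additional correction terms of $\psi_n$ beyond the single $c\log n$.
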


In \cite{BRZ}, the existence of parabolic basins of attraction depended not only on the existence of a basin for the parabolic shadow of the germ, but also on another ``attracting'' condition controlling the dynamics on the fibers of $\pi\colon\C^n\to \C^m$, which is the following:

\begin{defin}\label{def-parab_0}
Let $F\in\Dif$ be $2$-resonant with respect to $\l_{1}, \ldots,\l_{r}$, with $P, Q$ being the ordered generators over $\N$ of the resonances.
Let $k_0<+\infty$ be the weighted order of $F$. Let $f=\id + H_{k_0+1}$, with $H_{k_0+1}$ as in \eqref{matricehakim}, be a parabolic shadow of $F$.
We say that $F$ is {\it $(f,v)$-parabolically-attracting} with respect to $\{\l_1,\ldots, \l_r\}$ if $v$ is a normalized non-degenerate characteristic direction of $f$, (i.e., $H_{k_0+1}(v) = -(1/k_0)v$), and
\begin{equation}\label{parabolic_0}
\Re\left(\sum_{k_1+k_2=k_0\atop (k_1,k_2)\in\N^2} {\frac{a_{(k_1,k_2), j}}{\l_j}} v_1^{k_1}v_2^{k_2}\right) <0 \quad j=1,\ldots, r.
\end{equation}
We say that $F$ is {\it $(f,v)$-partially parabolically-attracting of order $s$} if there exists $1\le s<r$ such that \eqref{parabolic_0} is satisfied only for $j=1,\dots, s$.
We say that $F$ is {\it parabolically-attracting} if $F$ is $(f,v)$-parabolically-attracting with respect to some parabolic shadow $f$ and some $v\in \C^2$ a normalized non-degenerate characteristic direction.
We say that $F$ is {\it partially parabolically-attracting of order $s$} if  $F$ is $(f,v)$-partially parabolically-attracting of order $s$ for some $1\le s<r$ with respect to some parabolic shadow $f$ and some $v\in \C^2$ normalized non-degenerate characteristic direction.
\end{defin}

As mentioned in the introduction, 
one could think that it is possible to use all the results of \cite{Vi}, together with the attracting condition just defined, to obtain basins of attraction also for $2$-resonant germs whose parabolic shadow has a degenerate characteristic direction. However, this is not true, and we can provide in section~4 examples of germs with parabolic shadow having a basin along a degenerate characteristic direction but with no basins of attraction. Moreover, we have the following result, that holds in general for $m$-resonant germs with $m\ge2$ with respect to the condition of parabolic-attracting introduced in \cite[Definition 3.12]{BRZ}.

\begin{proposition}\label{Propo}
Let $F\in\Dif$ be $m$-resonant with respect to the first $r\le n$ eigenvalues, and let $f$ be the parabolic shadow of $F$. If $v\in\C^m$ is any representative of a degenerate characteristic direction for $f$, then $F$ cannot be $(f,v)$-parabolically-attracting.
\end{proposition}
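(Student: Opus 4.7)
The plan is to argue by contradiction, exploiting the explicit form of $H_{k_0+1}$ in the parabolic shadow (the $m$-variable analogue of \eqref{matricehakim}, see \cite{BRZ}). Setting
$$
A_s := \sum_{|k|=k_0}\frac{a_{k,s}}{\l_s}\, v^k,\qquad s=1,\dots,r,
$$
the condition that $F$ is $(f,v)$-parabolically-attracting is precisely $\Re A_s<0$ for every $s=1,\dots,r$. With $P^1,\dots,P^m\in\N^r\times\{0\}^{n-r}$ the ordered $\N$-generators of the resonances, the explicit formula for the shadow reads, for each $j=1,\dots,m$,
$$
H_{k_0+1,\,j}(v)=v_j\sum_{s=1}^r P^j_s A_s.
$$

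First I would unpack the hypothesis that $v$ is a degenerate characteristic direction, i.e.\ $H_{k_0+1}(v)=0$. From the displayed formula, this forces, for each $j=1,\dots,m$, either $v_j=0$ or $\sum_{s=1}^r P^j_s A_s=0$. Since $v\neq 0$ in $\C^m$, we may fix an index $j_0$ with $v_{j_0}\neq 0$, yielding the key relation
$$
\sum_{s=1}^r P^{j_0}_s A_s=0.
$$

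The contradiction then follows by taking real parts. The multi-index $P^{j_0}$ lies in $\N^r\times\{0\}^{n-r}$ and is non-zero (being a generator of the resonance lattice), so its entries $P^{j_0}_1,\dots,P^{j_0}_r$ are non-negative integers not all zero. Combined with $\Re A_s<0$ for every $s$, the sum $\sum_{s=1}^r P^{j_0}_s\,\Re A_s$ is a non-trivial non-negative integer combination of strictly negative reals, hence strictly negative, contradicting the vanishing of the real part of the relation displayed above.

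I do not anticipate any serious obstacle: the argument is a direct linear-algebra consequence of the factorization of $H_{k_0+1,j}$ with $v_j$ outside and the fact that non-negative weights cannot combine strictly negative real numbers to zero. The only point worth noting is that at least one $P^{j_0}_s$ with $s\le r$ is strictly positive, which is automatic since $P^{j_0}\in\N^r\times\{0\}^{n-r}$ is non-zero; the whole proof is essentially a two-line reality-check, and its content is really the observation that the parabolic-attracting condition is incompatible with the vanishing of any non-trivial non-negative linear combination of the $A_s$.
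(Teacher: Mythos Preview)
Your argument is correct and essentially identical to the paper's own proof: both introduce the quantities $A_s=\sum_{|K|=k_0}\frac{a_{K,s}}{\lambda_s}v^K$, pick an index $j$ with $v_j\neq 0$, use the factorization $H_{k_0+1,j}(v)=v_j\sum_s P^j_s A_s$ to obtain $\sum_s P^j_s A_s=0$, and derive a contradiction by taking real parts since $P^j\in\N^r\times\{0\}^{n-r}$ is non-zero. The only cosmetic difference is that you name $A_s$ explicitly while the paper writes the sums out in full.
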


\begin{proof}
Let $k_0\ge 1$ be the weighted order of $F$ and let $P^1, \dots, P^m\in\N^r\times\{0\}^{n-r}$ be the ordered generators of the resonances of $F$. Since $F$ is $m$-resonant with respect to the first $r$ eigenvalues $\l_1,\ldots,\l_r$, then by the Theorem of Poincar\'e-Dulac we can holomorphically conjugate it to a map of the form
\begin{equation}
\widetilde F(z) = Dz + \sum_{s=1}^r\sum_{|K|\ge k_0 \atop K\in\N^m} a_{K,s} z^{k_1 P^1+\cdots+ k_m P^m} z_s e_s +  \sum_{s=r+1}^n R_s(z)e_s + O(\|z\|^l),
\end{equation}
where $D={\rm Diag}(\lambda_1,\dots, \lambda_n)$, and $R_s(z)=O(\|z\|^2)$ for $s=r+1,\ldots,n$. 
Hence, for the parabolic shadow $f(u) = u + H_{k_0+1}(u)$ we have
$$
H_{k_0+1, j} (u) = u_j \sum_{|K|=k_0} \left(p_1^j \frac{a_{K,1}}{\lambda_1} + \cdots+ p_r^j \frac{a_{K,r}}{\lambda_r}\right) u^K
$$
for all $j=1,\dots, r$.

Let $v\in\C^m\setminus\{O\}$ be a degenerate characteristic direction for $f$, i.e., such that $H_{k_0+1}(v) =0$, and assume by contradiction that
$$
\Re\left(\sum_{|K|=k_0} {\frac{a_{K, s}}{\l_s}} v^K\right) <0 \quad \forall s=1,\ldots, r.
$$ 
Let $j\in \{1,\dots, m\}$ be such that $v_j\ne0$. Then, since $v$ is degenerate, we have
\begin{equation}
\begin{aligned}
0&=
\sum_{|K|=k_0} \left(p_1^j \frac{a_{K,1}}{\lambda_1} + \cdots+ p_r^j \frac{a_{K,r}}{\lambda_r}\right) v^K\\
&= p_1^j\left(\sum_{|K|=k_0}\frac{a_{K,1}}{\lambda_1}v^K\right) + \cdots +p_r^j \left(\sum_{|K|=k_0}\frac{a_{K,r}}{\lambda_r} v^K\right).
\end{aligned}
\end{equation}
Therefore
$$
p_1^j\Re\left(\sum_{|K|=k_0}\frac{a_{K,1}}{\lambda_1}v^K\right) + \cdots +p_r^j \Re\left(\sum_{|K|=k_0}\frac{a_{K,r}}{\lambda_r} v^K\right) = 0 
$$
which is impossible since $P^j$ is a generator, and hence $P^j\in \N^n$ and $|P^j|\ge 1$, and we are assuming $$\displaystyle{\Re\left(\sum_{|K|=k_0}\frac{a_{K,s}}{\lambda_s}v^K\right)<0}$$ for all ${s=1,\dots, r}$. 
\end{proof}

We now give the definition that we use to state our result. 

\begin{defin}\label{basic-def_0}
Let $F\in\Dif$ be $2$-resonant with respect to $\l_{1}, \ldots,\l_{r}$, with $P, Q$ being the ordered generators over $\N$ of the resonances.
Let $k_0<+\infty$ be the weighted order of $F$. Let $f=\id + H_{k_0+1}$, with $H_{k_0+1}$ as in \eqref{matricehakim}, be a parabolic shadow of $F$.
We say that $F$ is {\it $(f,v)$-irregular-nondegenerate} if $v\in \C^2$ is an irregular normalized non-degenerate characteristic direction for $f$. We say that $F$ is {\it irregular-nondegenerate} if $F$ is $(f,v)$-irregular-nondegenerate with respect to some parabolic shadow $f$ and some $v\in \C^2$ normalized non-degenerate characteristic direction.
\end{defin}

\begin{remark}
It is immediate to check that Definitions \ref{def-parab_0} and \ref{basic-def_0} are well posed: if $F$ is $(f,v)$-irregular-nondegenerate, or $(f,v)$-parabolically-attracting, with respect to a parabolic shadow $f$ and a normalized characteristic direction $v\in \C^2$, then it is so with respect to any parabolic shadow (for the corresponding normalized characteristic direction $\tilde{v}$). 
\end{remark}

\section{non-degenerate 2-resonant germs}

\begin{theorem}\label{mainthm1}
Let $F\in\Dif$ be $2$-resonant with respect to the  eigenvalues
$\{\l_1,\ldots\l_r\}$ and of weighted order $k_0$. Assume that
$|\l_j|=1$ for $j=1, \dots, r$ and $|\l_j|<1$ for
$j=r+1,\ldots, n$. If $F$ is irregular-non-degenerate and parabolically-attracting, then
there exists a basin of attraction
having $O$ at the boundary.
\end{theorem}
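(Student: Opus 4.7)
The plan is to mirror the strategy of \cite{BRZ}, the only substantial change being that Hakim's theorem (used there to produce a basin for the parabolic shadow) is replaced by Theorem \ref{irregular}. First I would put $F$ in Poincar\'e--Dulac normal form up to a high order $l$, writing $F = G + R$ with $R(z)=O(\|z\|^l)$ and $G$ as in Section~2. The truncation $G$ semi-conjugates through $\pi(z)=(z^P,z^Q)$ to a tangent-to-the-identity germ $h$ of $\C^2$ whose leading homogeneous term coincides with that of the parabolic shadow $f$; hence the irregular non-degenerate characteristic direction $v$ for $f$ is also one for $h$, and the parabolically-attracting inequality is unchanged. Applying Theorem \ref{irregular} to $h$ along $v$ produces a parabolic basin $B\subset\C^2$ together with a Fatou coordinate $\psi\colon B\to\C$ with $\psi\circ h=\psi+1$, from which standard parabolic estimates yield $\|h^n(u)\|\asymp n^{-1/k_0}$ for $u\in B$.

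The candidate attracting set in $\C^n$ is
\[
\widetilde B := \bigl\{z\in\C^n : \pi(z)\in B,\ |z_j|<\delta\ (1\le j\le r),\ |z_s|<C\,\|\pi(z)\|^{\beta}\ (r<s\le n)\bigr\},
\]
for small $\delta,C>0$ and a suitable exponent $\beta>0$. The main content is then to verify $F(\widetilde B)\subset\widetilde B$ and that iterates in $\widetilde B$ converge to $O$. For $1\le j\le r$, since $|\l_j|=1$, the parabolically-attracting condition \eqref{parabolic_0} combined with a direct expansion of $|F_j(z)|^2$ gives a bound of the shape
\[
|F_j(z)|^2 \le |z_j|^2\bigl(1-\eta\,\|\pi(z)\|^{k_0} + o(\|\pi(z)\|^{k_0})\bigr),\qquad \eta>0,
\]
so that along an orbit the telescoping product $\prod_n(1-\eta c/n+\cdots)$ forces $|z_j(n)|\to 0$. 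For $s>r$ the linear part contracts $z_s$ geometrically, and the bound $|z_s|<C\|\pi(z)\|^{\beta}$ is preserved provided $\beta$ is small enough that the polynomial slow-down of $\|\pi(z)\|$ is compatible with this geometric decay, and provided $l$ is large enough that $R$ is harmless. Finally, $\pi\circ G = h\circ\pi$ together with $R=O(\|z\|^l)$ gives $\pi\circ F = h\circ\pi + O(\|z\|^l)$, which places $\pi(F(z))\in B$ whenever $\pi(z)$ is well inside $B$.

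The main obstacle I expect is precisely this simultaneous balancing of exponents: the decay $\|\pi(F^n(z))\|\sim n^{-1/k_0}$ is only polynomial, so one must choose $\beta$ small (to guarantee invariance of the bound on $z_s$), $l$ large (to control the remainder in both the commutation and the modulus estimates), and $\delta$ small (so the parabolic-attracting estimate is effective on a domain where $\|\pi(z)\|^{k_0}$ dominates the $o(\cdot)$ term). The hypothesis $|\l_j|=1$ is what makes this go through: it kills rotational drift in the modulus $|z_j|$, so the negative-real-part condition \eqref{parabolic_0} is enough to close the argument, exactly as in \cite{BRZ} but now with the 2-dimensional basin supplied by the irregular-nondegenerate version of the tangent-to-the-identity theory.
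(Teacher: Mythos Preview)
Your overall architecture is the paper's, but your candidate basin is wrong in a way that breaks the invariance step. You only couple the last $n-r$ coordinates to $\|\pi(z)\|$ and allow the first $r$ to range over $|z_j|<\delta$. The problem is the commutation error: $\pi\circ F = h\circ\pi + O(\|z\|^{l})$ with the $O(\|z\|^{l})$ term bounded on your set only by a fixed constant $\sim\delta^{l}$. But $B$ is a parabolic petal with the origin on its boundary, so points $z\in\widetilde B$ can have $\|\pi(z)\|$ arbitrarily small while $\|z\|\sim\delta$; for such points the perturbation $O(\delta^{l})$ swamps the parabolic step $O(\|\pi(z)\|^{k_0+1})$ and can kick $\pi(F(z))$ out of $B$. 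The same issue ruins your modulus estimate for $j\le r$: you get $|F_j(z)|\le |z_j|\bigl(1-\eta\|\pi(z)\|^{k_0}\bigr)+O(\|z\|^{l+1})$, and on your set the remainder need not be $o\bigl(|z_j|\,\|\pi(z)\|^{k_0}\bigr)$, so neither $|F_j(z)|<\delta$ nor the telescoping product is justified.

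The fix, and what the paper actually does, is to tie \emph{all} coordinates to the base: one takes $\widetilde V=\{z:\pi(z)\in B,\ |z_j|<|s|^{k_0\beta}\ \text{for every }j=1,\dots,n\}$, where $s$ is the first blown-up coordinate on the base. Then $\|z\|^{l+1}\le C|s|^{k_0\beta(l+1)}$, and by choosing $l$ large relative to $\beta$ this is $O(|s|^{k_0+2})$, small enough to feed into the invariance of the explicit sectorial domain $V_{R,N,\theta}$ (in the inverted coordinates $(x,y)=(s^{-k_0},t^{-(n-1)})$) coming from Theorem~\ref{irregular}. The parabolically-attracting hypothesis is then used not merely to show $|z_j|$ decreases, but to verify the stronger inequality $|F_j(z)|<|s_1|^{k_0\beta}$ via the chain $|F_j(z)|\le p(x)\,|x|^{-\beta}$ and $|x_1|\le p(x)^{-1/\beta}|x|$, which requires the numerical condition $\beta(1+\delta)<c'$. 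Your sketch has the right ingredients but the wrong basin; once you impose $|z_j|<|s|^{k_0\beta}$ for $j\le r$ as well, the ``simultaneous balancing of exponents'' you anticipate becomes exactly the computation in the paper.
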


\begin{proof}
We follow the same strategy as in \cite{BZ} and \cite{BRZ}.
Let $P$ and $Q$ be the ordered generators over $\N$ of the resonances of $F$.
Up to biholomorphic conjugation, we can assume that $F(z) = (F_1(z),\dots, F_n(z))$ is of the form
$$
\begin{aligned}
&F_j(z) = \lambda_j z_j \Bigg(1 + \sum_{k_0\le k_1+k_2\le k_l \atop (k_1,k_2)\in\N^2}  \frac{a_{(k_1,k_2),j}}{\lambda_j} z^{ k_1 P+ k_2 Q} \Bigg) + O\left(\|z\|^{l+1}\right), \quad j=1,\ldots, r,\\
&F_{j}(z)=\l_{j} z_{j} + O(\|z\|^{2}) , \quad\quad\quad\quad\quad\quad \quad\quad \quad\quad\quad\quad\quad\quad \quad\quad\quad\quad\;\, j=r+1,\ldots, n,
\end{aligned}
$$
where
$$
k_l:= \max \{k_1+k_2 :  |k_1 P+ k_2 Q| \le l \}.
$$
 
We consider the map $\pi\colon (\C^n,0)\to (\C^2,0)$ defined by $\pi(z) = u := (z^P, z^Q)$. Then, for $j=1,\dots, r$, we can write
$$
F_j(z) =   G_{j}(u,z) + O\left(\|z\|^{l+1}\right), \quad  G_{j}(u,z):= \lambda_j z_j
\Bigg(1 + \sum_{k_0\le k_1+k_2\le k_l \atop (k_1,k_2)\in\N^2}  \frac{a_{(k_1,k_2),j}}{\lambda_j} u_1^{k_1} u_2^{k_2} \Bigg).
$$
The composition $\phi:= \pi\circ F$ can be written as
$$
\phi(z)=\Phi(u,z): =  \1\Phi(u) + g(z), \quad \1\Phi(u)=  u + H_{k_0 + 1}(u) + h(u),
$$
where $\1\Phi$ is induced by $G$ via $\pi\circ G=\1\Phi\circ \pi$, $H_{k_0+1}(u)$ has the form \eqref{matricehakim}, that is
\begin{equation*}
H_{k_0 + 1}(u) =\left(\begin{array}{c}
				\displaystyle u_1\sum_{k_1+k_2 = k_0} \left({p_1}\frac{a_{(k_1,k_2), 1}}{\l_1} + \cdots + {p_r}\frac{a_{(k_1,k_2), r}}{\l_r}\right)u_1^{k_1}u_2^{k_2}\\
				\displaystyle u_2\sum_{k_1+k_2 = k_0} \left({q_1}\frac{a_{(k_1,k_2), 1}}{\l_1} + \cdots + {q_r}\frac{a_{(k_1,k_2), r}}{\l_r}\right)u_1^{k_1}u_2^{k_2}
				\end{array}\right),
\end{equation*}
we have $h(u) = O(\|u\|^{k_0+2})$, and $g(z) = O(\|z\|^{l+1})$.

Thanks to our hypotheses, the parabolic shadow $u\mapsto u + H_{k_0 + 1}(u)$, has an irregular non-degenerate characteristic direction $[v]$. Therefore, Theorem \ref{irregular} implies that $\1\Phi$ has an attracting basin $B$ of parabolic type at the origin {\it centered} along the normalized characteristic direction $v\in\C^2$, with a Fatou coordinate. We will construct a basin of attraction $\2V\subset \C^n$ for $F$ in such a way that $\2V$ is projected into $B$ via $\pi$. 

For the sake of simplicity, we shall use linear coordinates $(s,t)\in \C^2$ where the normalized characteristic direction is $v = (1,0)$. Then the condition on $F$ to be parabolically-attracting translates on:
\begin{equation}
\Re\Bigg({\frac{a_{(k_0,0), j}}{\l_j}} \Bigg) <0 \quad j=1,\ldots, r.
\end{equation}

After blowing-up $\C^2$ at the origin as in \cite{Vi}, in the local chart
centered at $v$, we may assume the lifting of the map $\Phi$ to be of the form:
\begin{equation}
\begin{aligned}
& s_1 = \Phi_1(s,t,z) = s - \frac{1}{k_0} s^{k_0 + 1} + h_1(s,t) + g_1(z),\\
& t_1 = \Phi_2(s,t,z) = t - \frac{1}{n-1} s^{k_0}t^n + h_2(s,t) + s^{-1}g_2(z)
\end{aligned}
\end{equation}
with $h_1= O(s^{k_0+2}, s^{k_0+1}t)$, $h_2= O(s^{k_0+1}, s^{k_0}t^{n+1})$, and $g_1(z),g_2(z) = O(\|z\|^{l+1})$.

Thanks to Theorem \ref{irregular}, $\1\Phi(s,t)$ has an attracting basin $B$ along $v$.
We shall prove that, fixed $\beta>0$ small enough, the set
$$
\2V:=\{z\in\C^n : |z_j|< |s|^{k_0\beta} ~\hbox{for}~j=1,\dots, n, \; \pi (z):= (s, t)\in B\}
$$
is a basin of attraction for $F$. First of all, taking $\beta>0$ sufficiently small, it is easy to see that $\widetilde V$ is  an open non-empty set  of $\C^n$ and $O\in\partial \widetilde V$.

Next, we prove that $\widetilde V$ is $F$-invariant. Let $z\in \widetilde V$ and let $u=\pi(z)$.
Considering the change of coordinates $(x,y)= \psi(s,t) :=(s^{-k_0}, t^{-(n-1)})$ on a suitable open set, with the origin on its boundary, we have that $(s,t)\in B$ if and only if $(x,y)\in V$, where
$$
V=V_{R, N, \theta}:= \{(x,y)\in\C^2: \Re(x)>R, |\Arg(x)|<k_0 \theta, \Re(y)>R, |y|^N<|x|\},
$$
for $R,N$ large enough, and $\theta$ small. Fix $0<\delta<1/2$ and $0<c'<c$. Thanks to the parabolically-attracting hypothesis, there exists $\theta>0$ such that
\begin{equation}\label{oneba}
\left|1+ \sum_{k_1+k_2= k_0 \atop (k_1,k_2)\in\N^2} \frac{a_{(k_1,k_2),j}}{\l_j}  u_1^{k_1}u_2^{k_2} \right| \le 1 - 2{c }|s|^{k_0},
\end{equation}
for all $u\in B$. 
We choose $\beta>0$ such that
\begin{equation}\label{betabisb_0}
\beta(\delta +1 ) -c'<0
\end{equation}
and we can choose $l>1$ so that
\begin{equation}
\beta (l+1) \geq 4.
\end{equation}

Since $z \in \2 V$, then we have the following estimates for $g_1$ and $s^{-1}g_2$:
\begin{eqnarray*} 
\|s^{-1}g_2(z)\| < C |s|^{-1}\|z\|^{l+1} \leq C' |s|^{-1}|s|^{k_0\beta(l+1)} = C|s|^{k_0\beta(l+1)-1}.
\end{eqnarray*}
Similarly for $\|g_1(z)\|$; and we conclude that $\|g_1(z)\|,\|s^{-1}g_2(z)\|= O(|s|^{k_0+2})$.

In the coordinates $(x,y)= \psi(s,t) :=(s^{-k_0}, t^{-(n-1)})$ we have
\begin{equation}
\begin{aligned}
&x_1= x + 1 + \nu_1(x,y,z),\\
&y_1 = y + \frac{1}{x} + \nu_2(x,y,z),
\end{aligned}
\end{equation}
with
\begin{equation}
\begin{aligned}
&\nu_1(x,y,z) = O\left(\frac{1}{x^{1/k_0}}, \frac{1}{y^{1/(n-1)}}\right),\\
&\nu_2(x,y,z) = O\left(\frac{y^{\frac{n}{n-1}}}{x^{(k_0+1)/k_0}}, \frac{1}{xy^{1/(n-1)}}\right)
\end{aligned}
\end{equation}
If $R$ and $N$ are sufficiently large, for any $z\in \widetilde V$ we have $|\nu_1(x, y,z)|<\delta<1/2$. Therefore, we have $\Re(x_1)>R$ and $|\Arg(x_1)|<|\Arg(x)|<\theta$. Arguing as in the computations of \cite[p. 9]{Vi}, we obtain $\Re(y_1)>R$, and $|y_1|^N<|x_1|$, and so we proved that if $u=\pi(z)\in B$, then $u_1:=\pi(F(z))\in B$.

Now, given $z\in \widetilde V$, we have to estimate $|F_j(z)|$ for $j=1,\dots, n$.
To estimate the components  $F_j$ for $j=r+1,\ldots, n$, we can argue exactly as in \cite{BRZ} and we refer the reader to \cite[p. 17]{BRZ} for the proof. 
Hence, we have
\begin{equation}\label{primastima}
|F_{j}(z)|<  |s_1|^{k_0\beta}, \quad j=r+1,\ldots,n.
\end{equation}

For the other coordinates, we have
$$
F_j(z) = \lambda_j z_j \Bigg(1+ \sum_{k_1+k_2= k_0 \atop (k_1,k_2)\in\N^2} \frac{a_{(k_1,k_2),j}}{\l_j}  u_1^{k_1} u_2^{k_2} + f_j(u) \Bigg)+ g_j(z),
$$
with $f_{j}=O(\|u\|^{k_{0}+1})$ and $g_{j}=O(\|z\|^{l+1})$, for $j=1,\ldots,r$.
Thanks to \eqref{oneba} we have
$$
\left| 1+ \sum_{k_1+k_2= k_0 \atop (k_1,k_2)\in\N^2} \frac{a_{(k_1,k_2),j}}{\l_j}  u_1^{k_1} u_2^{k_2}  + f_j(u)\right|  \le 1 - \frac{c}{|x|}.
$$
Moreover, if $z\in \widetilde V$ and $R$ is sufficiently large, we have, for a suitable $D>0$,
$$
|g_j(z)|\le D\|z\|^{l+1} < \frac{D}{|x|^{\b(l+1)}}.
$$
Therefore for $j=1,\ldots, r$
\begin{equation}\label{Fjb_0}
\begin{aligned}
|F_j(z)|
&\le |\lambda_j| |x|^{-\b} \left( 1 - \frac{c }{|x|} \right) +\frac{D}{|x|^{\b(l+1)}}, \\
&\le \left( 1 - \frac{c }{|x|}+ \frac{D}{|x|^{\b l}} \right) |x|^{-\b}.
\end{aligned}
\end{equation}
Hence, if $R$ is
sufficiently large, we have
$$
p(x) : = 1 - \frac{c }{|x|}+ \frac{D}{|x|^{\b l}} <1.
$$
Now we claim that
\begin{equation}\label{stimatwob0}
|x_1| \le p(x)^{-1/\beta}|x|.
\end{equation}
Indeed, since we have
$$
x_1 = x + 1 + \nu_1(x, y, z),
$$
with $|\nu_1(x, y, z)|\le \delta$, we obtain
$$
\frac{|x_1|}{|x|} = \frac{|x + 1 + \nu_1(x, y, z)|}{|x|}
\le  1 + \frac{1}{|x|} + \frac{|\nu_1(x,y,z)|}{|x|}
\le 1 + \frac{1+ \delta }{|x|}.
$$
On the other hand, by our choice of $0<c'<c$ and taking $R$
sufficiently large, we have
$$
\left(1-\frac{c'}{|x|}\right)^{-1/\beta} \le p(x)^{-1/\beta},
$$
and hence, in order to prove \eqref{stimatwob0}, we just need to
check that
$$
1 + \frac{1+ \delta }{|x|} \le \left(1-\frac{c'}{|x|}\right)^{-1/\beta}.
$$
But
$$
\left(1-\frac{c'}{|x|}\right)^{-1/\beta} = 1 + \frac{1}{\beta}\frac{c'}{|x|} + O\left(\frac{1}{|x|^{2}}\right),
$$
and since \eqref{betabisb_0} ensures that $\delta + 1 -c'/\beta<0$, if $R$ is sufficiently large, \eqref{stimatwob0} holds, and the claim is proved.
Therefore, thanks to \eqref{Fjb_0} we have
\begin{equation}
|F_j(z)| < |s_1|^{k_0\beta}, \quad j=1,\dots, r,
\end{equation}
which together with \eqref{primastima} implies
 $F(\widetilde V)\subseteq \widetilde V$.

Finally, setting inductively $u^{(l)}=(s^{(l)},t^{(l)}): = \pi(F^{\circ (l-1)}(z))$, and denoting by $\rho_j\colon\C^n\to\C$ the projection $\rho_j(z)= z_j$, we obtain
$$
\left|\rho_j\circ F^{\circ l}(z)\right|\le \left|s^{(l)}\right|^{k_0\beta}
$$
for all $z\in \widetilde V$, implying that $F^{\circ l}(z)\to O$ as
$l \to +\infty$. This proves that $\widetilde V$ is a basin of
attraction of $F$ at $O$.
\end{proof}

\section{examples and remarks on partial cases}

\subsection{Example of parabolically attracting germs}
In \cite[Section 5.1]{BRZ} is shown a family of $2$-resonant germs in $\C^3$ which are $(f,v_1)$-attracting-nondegenerate and $(f,v_2)$-attracting-nondegenerate (with $f$ a parabolic shadow, and $v_1, v_2$ two different normalized non-degenerate characteristic directions for $f$), and which are $(f,v_1)$-parabolically-attracting but not $(f,v_2)$-parabolically-attracting. Here we shall weaken certain conditions and see that the existence of basins is still satisfied by using our Theorem \ref{mainthm1}.

Let $P^1=(2,3,0)$ and $P^2=(0,2,5)$. Let $\l_1,\l_2,\l_3\in \C^\ast$ be of modulus $1$ such that relations are generated by $\l_1^2\l_2^3=1$ and $\l_2^2\l_3^5=1$. It is easy to see that  $\l_j=\l^L$ for $L\in \N^3$, $|L|\geq 2$, $j=1,2,3$ if and only if $L=k_1P^1+k_2P^2+e_j$ for some $k_1,k_2\in \N$.

Let $F$ be of the form
\begin{equation*}
F_j(z)=\l_j z_j (1+b_{e_1,j}z^{P^1}+b_{e_2,j}z^{P^2})\quad j=1,2,3.
\end{equation*}
Then $F$ is $2$-resonant with respect to $\{\l_1,\l_2,\l_3\}$ and of weighted order $k_0=1$. A parabolic shadow of $F$ is $f(u)= u+H_2(u)$ where
\[
H_2(u)=\left(\begin{array}{c}
				\displaystyle u_1\left[ (2b_{e_1,1} + 3b_{e_1,2})u_1+(2b_{e_2,1} + 3b_{e_2,2})u_2\right]\\
				\displaystyle u_1\left[ (2b_{e_1,2} + 5b_{e_1,3})u_1+(2b_{e_2,2} + 5b_{e_2,3})u_2\right]
				\end{array}\right).
\]
The directions $[1:0]$ and $[0:1]$ are characteristic directions, and as seen in \cite{BRZ}, imposing $2b_{e_1,1} + 3b_{e_1,2}=-1, \quad 2b_{e_2,2} + 5b_{e_2,3}=-1,$ it follows that the two directions are non-degenerate characteristic directions for $f$. Furthermore setting $2b_{e_2,1} + 3b_{e_2,2}=-p,\quad 2b_{e_1,2} + 5b_{e_1,3}=-q$ with $q,p>1$ we have that $(1,0)$ and $(0,1)$ are normalized fully-attracting non-degenerate characteristic directions for $f$, and hence $F$ is $(f,(1,0))$-attracting-non-degenerate and $(f,(0,1))$-attracting-non-degenerate.

However, if we impose the following condition:
\begin{equation}\label{dima2}
2b_{e_1,1} + 3b_{e_1,2}=-1, \quad 2b_{e_2,1} + 3b_{e_2,2}=-p =-1
\end{equation}
we obtain that $(1,0)$ is an irregular normalized characteristic direction for $f$, and hence $F$ is $(f,(1,0))$-irregular. 

Analogously, imposing
\begin{equation}\label{dima3}
2b_{e_2,2} + 5b_{e_2,3}=-1, \quad 2b_{e_1,2} + 5b_{e_1,3}=-q =-1
\end{equation}
we have that $(0,1)$ is an irregular normalized characteristic direction for $f$, hence $F$ is $(f,(0,1))$-irregular. 

The condition for $F$ to be $(f,(1,0))$-parabolically-attracting it is as before:
\begin{equation}\label{dima5}
\Re b_{e_1,j}<0 \quad j=1,2,3\,,
\end{equation}
whereas $F$ is $(f,(0,1))$-parabolically-attracting if and only if
\begin{equation}\label{dima6}
\Re b_{e_2,j}<0 \quad j=1,2,3\,.
\end{equation}

\subsection{Example of $2$-resonant degenerate germ with no basins of attraction.} 
Let $P^1=(2,3,0)$, $P^2=(0,2,5)$, and let $\l_1,\l_2,\l_3\in \C^\ast$ be of modulus $1$ such that relations are generated by $\l_1^2\l_2^3=1$ and $\l_2^2\l_3^5=1$, as in the previous example.

Let $F$ be of the form
\begin{equation*}
F(z)=\left(\lambda_1 z_1(1 + z^{P^2})\,,\, \lambda_2 z_2\,,\, \lambda_3 z_3\left(1+ {3\over 5} z^{P^2}\right)\right).
\end{equation*}
Then $F$ is $2$-resonant with respect to $\{\l_1,\l_2,\l_3\}$ and of weighted order $k_0=1$. It is clear that $F$ has no basins of attraction. Indeed, for any point $w\ne0$ in a neighborhood of zero such that $w_2\ne 0$ we have $(F^{\circ\ell}(w))_2= \lambda_2^\ell w_2$ and then their orbits cannot converge to $O$, and for any point $w\ne0$ in a neighborhood of zero such that $w_2= 0$ we have $(F^{\circ\ell}(w))= (\lambda_1^\ell w_1, 0, \lambda_3^\ell w_3)$ and hence also their orbits cannot converge to $O$.
However, parabolic shadows of $F$ have a basin of attraction. In fact, a parabolic shadow of $F$ is $f(u)= u+H_2(u)$ where
\[
H_2(u)=\left(2u_1u_2,3u_2^2\right).
\]
The direction $[1:0]$ is a degenerate Fuchsian characteristic direction satisfying the hypotheses of \cite[Theorem 2]{Vi}, whereas $[0:1]$ is a non-degenerate characteristic direction whose director has strictly negative real part. Therefore, $f$ has a basin of attraction centered in $[1:0]$ but it cannot have a basin centered in $[0:1]$ (see \cite{Ha1} or \cite{Ari}). Note that $F$ is not $(f, (1,0))$-parabolically-attracting.

\subsection{Remarks on partial cases.} We end this section showing that for holomorphically normalizable germs, by weakening the hypothesis of parabolically attractiveness, we can still get information on the dynamics, namely the existence of parabolic manifolds. We recall that a {\it parabolic manifold} $P$  of dimension $1\leq s\leq
n$ for $F\in\Diff(\C^n, 0)$ is the biholomorphic image of a simply connected open set in $\C^s$ such that $O\in \partial P$, $F(P)\subset P$ and $\lim_{\ell\to\infty}F^{\circ \ell}(z)=0$ for all $z\in P$. 

We distinguish the one-resonant case and the $m$-resonant case with $m\ge 2$.

\sm\no {\bf 1-resonant case.} We use the same notation of \cite{BZ}.

\begin{proposition}
Let $G\in\Dif$ be one-resonant with respect to all eigenvalues $\{\l_1,\ldots,\l_n\}$, with $|\l_j|=1$, but $\lambda_j^q\ne 1$, for $j=1,\ldots, n$, and non-degenerate. Assume that $G$ is holomorphically normalizable germ, such that:
$$
\begin{cases}
\displaystyle\Re\left(\frac{a_j}{\lambda_j}\frac{1}{\Lambda(G)}\right)>0 &\hbox{for}~~j=1,\dots, s,\\
\displaystyle\Re\left(\frac{a_h}{\lambda_h}\frac{1}{\Lambda(G)}\right)<0 &\hbox{for}~~h=s+1,\dots, n,
\end{cases}
$$
for some $1\le s<n$, and let $\alpha\in\N^n$ be the generator of resonance of $\{\l_1,\dots,\l_n\}$. Then:
\begin{enumerate}
\item if $\alpha\not\in\N^{s}\times\{0\}^{n-s}$, then the unique point in a neighbourhood of the origin with orbit under $G$ converging to $O$ is the origin itself;
\sm\item if $\alpha\in\N^{s}\times\{0\}^{n-s}$, then there exists a parabolic manifold of dimension $s$ for $G$ at $O$. 
\end{enumerate}
\end{proposition}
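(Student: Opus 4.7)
The plan is to exploit the hypothesis of holomorphic normalizability to put $G$ into an exact Poincar\'e--Dulac normal form, and then to analyze orbits coordinate by coordinate using the semiconjugacy to the one-dimensional parabolic shadow. By holomorphic normalization I may assume
\[
G_j(z)=\l_j z_j\Bigl(1+\sum_{k\geq 1}\tfrac{a_{k,j}}{\l_j}\,z^{k\alpha}\Bigr),\qquad j=1,\dots,n,
\]
with $a_j=a_{1,j}$. In this form every coordinate hyperplane $\{z_j=0\}$ is $G$-invariant, and on the analytic set $\{z^\alpha=0\}=\bigcup_{j:\,\alpha_j>0}\{z_j=0\}$ the map $G$ coincides with the diagonal linear part $(\l_1 z_1,\dots,\l_n z_n)$. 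Since $|\l_j|=1$ for every $j$, the iterates of any point $w\in\{z^\alpha=0\}$ preserve its Euclidean norm, so the only orbit in $\{z^\alpha=0\}$ converging to $O$ is the trivial one.

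Next I would analyze orbits with $u^{(\ell)}:=(z^{(\ell)})^\alpha\neq 0$. The semiconjugacy $\pi\circ G=f\circ\pi$ with $\pi(z)=z^\alpha$ makes $\{u^{(\ell)}\}$ an orbit of the parabolic shadow $f(u)=u+\Lambda(G)u^2+O(u^3)$, where $\Lambda(G)=\sum_j\alpha_j a_j/\l_j$. Assuming $u^{(\ell)}\to 0$, the classical one-dimensional parabolic theory gives $u^{(\ell)}=-1/(\Lambda(G)\,\ell)+O(\log\ell/\ell^2)$, and telescoping
\[
\log|z_j^{(\ell)}|-\log|z_j^{(0)}|=\sum_{i<\ell}\Re\log\Bigl(1+\tfrac{a_j}{\l_j}u^{(i)}+O\bigl((u^{(i)})^2\bigr)\Bigr)
\]
yields the asymptotic $|z_j^{(\ell)}|\sim|z_j^{(0)}|\,\ell^{-\Re(a_j/(\l_j\Lambda(G)))}$ as $\ell\to+\infty$, for every $j$.

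With these two ingredients, part (1) follows as follows: if $\alpha_k>0$ for some $k>s$, then any $w$ with $G^\ell(w)\to O$ either satisfies $w^\alpha=0$ (and hence $w=O$ by the first paragraph), or satisfies $w^\alpha\neq 0$, in which case the asymptotic above forces $|z_k^{(\ell)}|\to+\infty$, since the exponent $-\Re(a_k/(\l_k\Lambda(G)))$ is strictly positive by the hypothesis on indices $>s$, contradicting $G^\ell(w)\to O$. For part (2), the assumption $\alpha\in\N^s\times\{0\}^{n-s}$ means that the monomial $z^\alpha$ is independent of $z_{s+1},\dots,z_n$, so the subspace $\Sigma=\{z_{s+1}=\cdots=z_n=0\}$ is $G$-invariant, and $G|_\Sigma\in\Diff(\C^s,O)$ is 1-resonant with the same generator $\alpha$, the same value of $\Lambda$, and satisfies $\Re(a_j/(\l_j\Lambda(G)))>0$ for every $j=1,\dots,s$, i.e.\ it is \emph{fully} parabolically-attracting in the sense of \cite{BZ}. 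Applying the main theorem of \cite{BZ} to $G|_\Sigma$ produces a parabolic basin of attraction $B\subset\Sigma$ of dimension $s$, and, viewed inside $\C^n$, $B$ is the required parabolic manifold for $G$ at $O$.

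The main obstacle will be to justify the asymptotic $|z_j^{(\ell)}|\sim|z_j^{(0)}|\,\ell^{-\Re(a_j/(\l_j\Lambda(G)))}$ with errors controlled uniformly in $j$ and along the orbit: this requires sharp enough estimates on the one-dimensional Fatou coordinate of $f$, together with a careful control of the telescoping product of the factors $1+(a_j/\l_j)u^{(i)}+O((u^{(i)})^2)$, so as to isolate the precise power of $\ell$ prescribed by $\Re(a_j/(\l_j\Lambda(G)))$ and to handle the subleading logarithmic contributions. The required estimates are essentially those developed in \cite{BZ}, adapted to split the coordinates according to the sign of the real part.
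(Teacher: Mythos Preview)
Your proof is correct and follows essentially the same route as the paper: holomorphic normalization, the observation that on $\{z^\alpha=0\}$ the map reduces to the unitary diagonal rotation $(\l_1 z_1,\dots,\l_n z_n)$ (so only $O$ converges), and for part~(2) restriction to $\{z_{s+1}=\cdots=z_n=0\}$ followed by \cite[Theorem~1.1]{BZ}. The only difference is in part~(1): you explicitly derive the asymptotic $|z_k^{(\ell)}|\asymp\ell^{-\Re(a_k/(\l_k\Lambda(G)))}$ from the Fatou coordinate of the one-dimensional shadow and a telescoping product, whereas the paper simply writes ``arguing as in the proof of \cite[Proposition~4.2]{BZ}'' to conclude $w_{s+1}=\cdots=w_n=0$, whence $w^\alpha=0$; your telescoping computation is exactly what that citation unpacks. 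One harmless inaccuracy: writing $f(u)=u+\Lambda(G)u^2+\cdots$ tacitly assumes weighted order $k_0=1$; for general $k_0$ the shadow is $f(u)=u+\Lambda(G)u^{k_0+1}+\cdots$ and $u^{(\ell)}$ decays like $\ell^{-1/k_0}$, but the sign of the exponent governing $|z_j^{(\ell)}|$ is unchanged and the argument goes through verbatim.
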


\begin{proof}
We may assume without loss of generality that $G$ is in Poincar\'e-Dulac normal form.
Let $w$ be a point in a neighborhood of the origin with orbit under $G$ converging to the origin. Then  $\lim_{l\to \infty}\pi\circ G^{\circ l}(w) =0$, and since $\pi\circ G^{\circ l} = \Phi^{\circ l}\circ \pi$, we have $\lim_{l\to \infty}\Phi^{\circ l}(\pi(w)) =0$. Therefore two cases can occur: either $\pi(w)=0$, or $\pi(w)\ne 0$. If $\pi(w) = 0$, then $w=O$. In fact, if $\pi(w) =0$, then for every $\ell\ge 1$, we have $\|G^{\circ\ell}(w)\| = \|(\lambda_1^\ell w_1, \dots, \lambda_n^\ell w_n)\|=\|w\|$ and hence it can converge to the origin if and only if $w=O$.

Assume that $\alpha\not\in\N^{s}\times\{0\}^{n-s}$, and there exists a point $w\ne O$ in a neighborhood of the origin, whose orbit under $G$ converges to $O$. Then $\pi(w)$ lies in an attracting petal $P^+_j$ for one $j$, and so $w\in U^+_j:=\pi^{-1}(P^+_j)$. Arguing as in the proof of \cite[Proposition 4.2]{BZ}, we thus have that $w_{s+1}= \cdots= w_n=0$, and so $\pi(w) = w^\a= 0$ contradicting our hypothesis on $w$.

Assume now that $\alpha\in\N^{s}\times\{0\}^{n-s}$. Therefore, we have $G_j(z)= G_j(z_1,\dots,z_s)$, for $j=1,\dots, s$, so $G|_{\{z_{s+1}=\dots=z_n=0\}}$ is one-resonant with generator $(\a_1,\dots, \a_s)$, non-degenerate and parabolically-attracting, and hence thanks to \cite[Theorem 1.1]{BZ} we deduce that $\pi^{-1}(P^+_j)\cap \{z_{s+1}=\dots=z_n=0\}$ contains a parabolic manifold of $G$ of dimension $s$ for any attracting petal $P_j^+$. 
\end{proof}
\sm
\no {\bf $m$-resonant case.}
In the $m$-resonant case, with $m\ge 2$, it is not possible to obtain a result completely analogous to the previous proposition, because, even if it is possible to generalize \cite[Proposition 4.2]{BZ} using attracting parabolic basins along non-degenerate directions, it is not true that for any point $w$ with orbit under $G$ converging to $O$, its projection $\pi(w)$ has to lie in a basin centered along a characteristic direction, since there exist tangent to the identity germs with orbit converging to the origin but not along a characteristic direction (see \cite{Ri}). However, we have the following corollary of \cite[Theorem 1.1]{BRZ} and our Theorem 1, where we use the conditions of {\it partial parabolic attractiveness} defined in Definition \ref{def-parab_0}.

\begin{corollary}
Let $G\in\Dif$ be $m$-resonant with respect to all eigenvalues $\{\l_1,\ldots,\l_n\}$, with $|\l_j|=1$, but $\lambda_j^q\ne 1$, for $j=1,\ldots, n$, and holomorphically normalizable. Assume that $m=2$ and $G$ is attracting-nondegenerate or irregular-nondegenerate, or $m\ge 3$ and $G$ attracting-nondegenerate.  If $G$ is partially parabolically-attracting of order $1\le s<n$, and the ordered generators over $\N$ of the resonances satisfy $P^1,\dots, P^m\in\N^{s}\times\{0\}^{n-s}$, then there exists a parabolic manifold of dimension $s$ for $G$ at $O$. 
\end{corollary}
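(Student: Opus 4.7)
The plan is to reduce to an $s$-dimensional problem by restricting $G$ to a $G$-invariant linear subspace on which the hypothesis of partial parabolic attractiveness becomes full parabolic attractiveness, and then to invoke either Theorem \ref{mainthm1} or \cite[Theorem 1.1]{BRZ} in that lower-dimensional setting.

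First, using holomorphic normalizability, I would put $G$ in Poincar\'e-Dulac normal form after a biholomorphic change of coordinates. Every resonant monomial appearing in $G_j$ then has the shape $z_j\, z^{k_1 P^1+\cdots+k_m P^m}$ with $k_1+\cdots+k_m\ge 1$. Because $P^1,\dots,P^m\in\N^s\times\{0\}^{n-s}$, for each $j>s$ any such monomial is divisible by $z_j$ and otherwise depends only on $z_1,\dots,z_s$; hence $G_j(z_1,\dots,z_s,0,\dots,0)=0$ for all $j>s$, and the linear subspace $M:=\{z_{s+1}=\cdots=z_n=0\}\cong\C^s$ is $G$-invariant.

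Next I would set $\tilde G:=G|_M\in\Dif(\C^s,O)$ and verify that it inherits all the relevant structure. The projected generators $\tilde P^j:=((P^j)_1,\dots,(P^j)_s)\in\N^s$ remain $\Q$-linearly independent and generate all resonances among $\lambda_1,\dots,\lambda_s$ inside $\C^s$, so $\tilde G$ is itself $m$-resonant with respect to all its eigenvalues (of modulus one), of the same weighted order $k_0$ as $G$. Because $(P^j)_h=0$ for $h>s$, inspection of formula \eqref{matricehakim} shows that only coefficients $a_{K,h}$ with $h\le s$ contribute to the parabolic shadow of $G$; consequently the parabolic shadow of $\tilde G$ is literally the same map $f=\id+H_{k_0+1}$. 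Thus any characteristic direction $v$ of $f$, together with its attracting-nondegenerate or irregular-nondegenerate type, transfers verbatim from $G$ to $\tilde G$, and the inequalities in \eqref{parabolic_0} required for $\tilde G$ to be fully $(f,v)$-parabolically-attracting are exactly those for $j=1,\dots,s$, which hold by the partial parabolic-attractiveness of order $s$ hypothesized for $G$.

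Finally, I would apply the appropriate basin theorem to $\tilde G$ on $\C^s$: Theorem \ref{mainthm1} if $m=2$ (covering both the attracting-nondegenerate case via \cite{BRZ} and the irregular-nondegenerate case proved in this paper), or \cite[Theorem 1.1]{BRZ} if $m\ge 3$. In either case this yields an open basin of attraction $\tilde B\subset\C^s$ for $\tilde G$ with $O\in\partial\tilde B$ and $\tilde G^{\circ\ell}(\tilde z)\to O$ for every $\tilde z\in\tilde B$. The image of $\tilde B$ under the inclusion $(\tilde z_1,\dots,\tilde z_s)\mapsto(\tilde z_1,\dots,\tilde z_s,0,\dots,0)$ is $G$-invariant, has $O$ on its boundary, and every orbit there converges to $O$ under $G$, giving the required parabolic manifold of dimension $s$ at $O$. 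The only slightly delicate step is the identification of the parabolic shadow of $\tilde G$ with that of $G$; once this bookkeeping is settled, every other hypothesis transfers mechanically and the lower-dimensional basin theorems apply directly.
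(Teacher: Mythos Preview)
Your proposal is correct and follows the same approach as the paper: restrict $G$ (assumed in Poincar\'e--Dulac normal form) to the invariant subspace $\{z_{s+1}=\cdots=z_n=0\}$, observe that the restriction inherits $m$-resonance, the same parabolic shadow, and full parabolic attractiveness, and then invoke \cite[Theorem~1.1]{BRZ} or Theorem~\ref{mainthm1}. The paper's proof is considerably terser---it simply asserts that $G_j(z)=G_j(z_1,\dots,z_s)$ for $j\le s$ and that the restricted germ satisfies the hypotheses of the cited theorems---whereas you spell out the invariance of $M$, the identification of parabolic shadows via \eqref{matricehakim}, and the transfer of the characteristic direction type; but this is added detail, not a different argument.
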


\begin{proof}
We have $G_j(z)= G_j(z_1,\dots,z_s)$, for $j=1,\dots, s$, and so $G|_{\{z_{s+1}=\dots=z_n=0\}}$ is $m$-resonant and satisfies the hypotheses of \cite[Theorem 1.1]{BRZ} or Theorem \ref{mainthm1} (if $m=2$) and so we deduce that $\{z_{s+1}=\dots=z_n=0\}$ contains a parabolic manifold of $G$ of dimension $s$. 
\end{proof}



\end{document}